\title{\LARGE \bf
Phase Uncertainty to State Stability of Continuous Periodic Orbits
}
\author{Shishir Kolathaya
\thanks{Shishir Kolathaya is with the School of Mechanical and Civil Engineering, California Institute of Technology,
        Pasadena, CA, USA
        {\tt\small sny@caltech.edu}}%
}
\begin{document}
\maketitle
\thispagestyle{empty}
\pagestyle{empty}

\begin{abstract}
The paper shows sufficiency conditions for stability of continuous periodic orbits under phase uncertainty. Phase based uncertainty is a trait of bipedal walking robots, where the desired trajectories are parameterized by a monotonous function. This monotonous function, called the phase variable, is often affected by intermittent perturbations due to noisy sensors. We will mainly focus on continuous periodic orbits obtained via parameterized trajectories, and then analyze their stability properties under a noisy phase estimation. In other words, our focus is on examples where phase variables are difficult to compute, and therefore are imperfect. We will show that stable periodic orbits subject to phase based uncertainty are input to state stable.
\end{abstract}

\section{Introduction} This article provides a proof for phase-uncertainty-to-state stability of continuous periodic orbits. Phase variables appear extensively in the field of locomotion pattern generators for bipedal robots (see \figref{fig:taucalf}). See \cite{kolathaya2016time,hscc17running} for a brief overview on phase uncertainty. The primary purpose of phase variables is to modulate the desired trajectories of the actuated joints of the robot from start to finish. Choosing a state dependent phase variable renders the trajectory tracking control law autonomous. This state dependency also results in the injection of perturbations into the system via noisy sensory feedback. Therefore stability properties of walking behaviors under an imperfect phase determination is of interest to us.

In this manuscript we will establish preliminary results on stability of continuous periodic orbits under imperfect phase determinations (phase uncertainty). See \cite{TAC:kolathayaPSS} for a detailed analysis on stability of hybrid periodic orbits under phase uncertainties. Also see \cite{kolathaya2016parameter,kolathaya2015parameter} for overview on parameter-uncertainty-to-state stability of hybrid periodic orbits.

The paper is structured as follows. Section II will introduce a brief overview on feedback control laws used to realize stable periodic orbits. Section III will introduce the notion of {\it input to state stability} (ISS), and then the notion of {\it phase uncertainty to state stability}. Finally, Section IV will introduce the main theorem of the paper demonstrating that periodic orbits under phase based uncertainties can be rendered phase to state stable.

\begin{figure}[ht!]
 \centering
 \includegraphics[height=5cm]{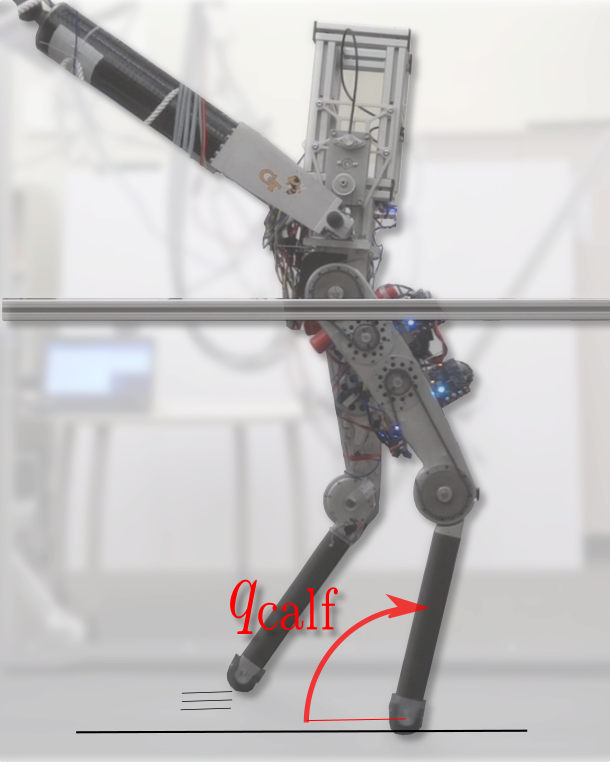}
 \caption{The red arrow indicates a specific phase variable candidate; in this case the calf angle. As the robot progresses throughout a step, the calf angle monotonically increases. This is scaled appropriately with an offset to modulate from $0$ to $1$ for each step.}
 \label{fig:taucalf}
\end{figure}

  \section{Feedback Control}
\label{sec:control}

The goal of this section is to define the set of outputs given the state $x$. Virtual constraints \cite{WGCCM07} consist of a vector of actual outputs given as $y^a:T\ConfigSpace \to \mathbb{R}^k$, and a vector of desired outputs given as $y^d :\R_{\geq 0} \to \mathbb{R}^k$. Here $y^d$ is a map from the positive reals, and can thus be parameterized by a phase (or time) variable $\tau:T\ConfigSpace \to \R_{\geq 0}$ (or $\tau:\R_{\geq 0} \to \R_{\geq 0}$ for time based). By adapting a feedback linearizing controller, we can drive the relative degree one outputs (velocity outputs)
\begin{align}\label{eq:d1o}
y_{1} (q,\dq) = y^a_{1}(q,\dq) - y^d_{1}(\tau,\alpha) \in \R^{k_{1}},
\end{align}
and relative degree two outputs (pose outputs)
\begin{align}
\label{eq:d2o}
y_{2} (q) = y^a_{2}(q) - y^d_{2}(\tau,\alpha) \in \R^{k_{2}},
\end{align}
to zero, with $\alpha$ denoting the parameters of the desired trajectory. ${k_{1}}+{k_{2}}=k$. These outputs are generally called \emph{virtual constraints} \cite{WGCCM07}. Normally, the phase variable, $\tau$, for relative degree two outputs is a function of the configuration $\tau(q)$. Walking gaits, viewed as a set of desired periodic trajectories, are modulated as functions of a phase variable to eliminate the dependence on time \cite{villarreal2014survey}. In this case, the velocity (relative degree one) outputs are: $y_{1} (q,\dq) = y^a_{1}(q,\dq) - y^d_{1}(\alpha)$, where the parameterization w.r.t. $\tau$ is absent. In terms of the states, $x$, we can define the outputs as follows \cite{ames2014human}:
\begin{align}
y_{1} (x) &= y^a_{1}(x) - y^d_{1}(\alpha) \nonumber \\
y_{2} (x) &= y^a_{2}(x) - y^d_{2}(\tau,\alpha) .
\end{align}

\newsec{Feedback Linearization.}\label{subsec:fblin}
The feedback linearizing controller that drives the purely state dependent outputs $y_{1}\to 0$, $y_{2} \to 0$ is given by:
\begin{align}
\label{eq:fblin}
 u = \begin{bmatrix}L_{g} y_{1} \\  L_{g} L_{f} y_{2}\end{bmatrix} ^{-1} \left ( - \begin{bmatrix} L_{f} y_{1} \\ L_{f}^2 y_{2} \end{bmatrix} + \mu \right ),
\end{align}
where $L_{f},L_{g}$ denote the Lie derivatives and $\mu$ denotes the auxiliary input applied after the feedback linearization.

Note that any effective tracking controller will theoretically suffice (the experimental implementation uses PD control \cite{Hereid_etal_2016}). We therefore employ a control Lyapunov function (CLF) based controller that can drive the following outputs to zero
\begin{align}\label{eq:notationeta}
 \eta = \begin{bmatrix} y_{1} \\ y_{2} \\ \dot y_{2} \end{bmatrix}.
\end{align}
If the system has outputs with more relative degrees of freedom, then $\eta$ can be accordingly modified. Applying the controller \eqref{eq:fblin} results in the following output dynamics:
\begin{align}\label{eq:FGandall}
 \dot{\eta} =  \underbrace{\begin{bmatrix}
				      0 & 0 & 0 \\
				      0 & 0 & I_{k_{2} \times k_{2}} \\
				      0 & 0 & 0
				      \end{bmatrix}}_{F} \eta + \underbrace{\begin{bmatrix}  I_{k_{1} \times k_{1}} & 0 \\ 0 & 0 \\ 0 & I_{k_{2} \times k_{2}} \end{bmatrix}}_{G} \mu, 
\end{align}
where $k_{1}+k_{2} =k$. $k_{1}$ is the size of the velocity outputs $y_{1}$, and $k_{2}$ is the size of the relative degree two outputs $y_{2}$. The dimension of the outputs $k$ is typically equal to the number of control inputs $m$. 

The auxiliary control input $\mu$ is chosen via control Lyapunov functions (CLFs) that drives $\eta \to 0$. More on CLFs is explained toward the end of this section.

\newsec{Zero Dynamics.} When the control objective is met such that $\eta = 0$ for all time then the system is said to be operating on the {\it  zero dynamics surface}:
\begin{align}
\label{eq:zerodyn}
 \ZD{} = \{ (q,\dq) \in \pi_x(\Domain_{}) | \eta = 0 \},
\end{align}
for the domain $\Domain$. Further, by relaxing the zeroing of the velocity output $y_{1}:T_q\ConfigSpace \to \R^{k_1}$, we can realize {\it partial zero dynamics} \cite{ames2014human}:
\begin{align}
\label{eq:zerodyn2}
 \PZD{} = \{ (q,\dq) \in \pi_x(\Domain) | y_{2} = 0 , L_f y_{2} = 0 \}.
\end{align}
The humanoid DURUS, has feet and employs ankle actuation to propel the hip forward during the continuous dynamics. The relaxation assumption is implemented on the hip velocity, resulting in {\it partial zero dynamics}.
For the running robot DURUS-2D, since the feet are underactuated, purely relative degree two outputs are picked that result in {\it full zero dynamics } of the system: $\PZD{}=\ZD{}$,
due to the absence of velocity outputs.

The zero dynamics are characterized by the zero dynamic coordinates $z \in \R^{2n-k_1-2k_2}$, which when combined with the normal coordinates $\eta$ form the transformed state space for the full order dynamics: 
\begin{align}
\label{eq:dynamicseta}
 \dot{\eta} &= F \eta + G \mu \nonumber \\
 \dot{z}  &= \Psi (\eta ,z ), 
\end{align}
 When the transverse and the zero dynamics are combined together, we get the full order dynamics. Based on this construction, we have the diffeomorphism $\Phi : \R^{2n} \to \R^{2n}$ that maps from $x = (\q,\dq)$ to $(\eta,z)$. The diffeomorphism can be divided into parts:
 \begin{align}
  \label{eq:diffeomorphism}
  \Phi (x) = \left [  \begin{array}{c} 
   \Phi_{1}(x) \\ \hline \Phi_{2}(x) \\ \hline \Phi_{3}(x)
  \end{array} \right ] = \left [ \begin{array}{c}
			y_{1} (\q,\dq) \\\hline y_{2} (\q) \\ \dot y_{2} (\q,\dq) \\\hline z (\q,\dq)
		    \end{array} \right ].
 \end{align}
 Similarly, the outputs can also be divided into two parts: 
 \begin{align}\label{eq:etasplit}\eta = \begin{bmatrix} y_{1} \\ \eta_{2}\end{bmatrix}, \: \rm{where} \quad  \eta_{2}=\begin{bmatrix} y_{2}\\ \dot y_{2}\end{bmatrix}.\end{align}

\newsec{Control Lyapunov Function.} We will define the {\it control Lyapunov function }(CLF), and the {\it rapidly exponentially stabilizing control Lyapunov function} (RES-CLF) as follows:
\begin{definition}
For the system \eqref{eq:dynamicseta}, a continuously differentiable function $V: \R^{k_1+2k_2} \to \R_{\geq 0}$ is an \textbf{ exponentially stabilizing control Lyapunov function  (ES-CLF)} if there exist positive constants $\underbar c,\bar c,c > 0$ such that for all $\eta,z$.
\begin{align}
\label{eq:es-clf}
 & \underbar c \|\eta\|^2 \leq V(\eta) \leq \bar c \|\eta\|^2 \nonumber \\
 & \inf_{u \in \mathrm{U} }  [ L_f V(\eta,z) + L_g V(\eta,z) u + c V(\eta) ] \leq 0.
\end{align}
\end{definition}
Here $L_f,L_g$ are the Lie derivatives. We can accordingly define a set of controllers which render exponential convergence of the transverse dynamics:
\begin{align}
\K(\eta,z) = \{ u \in \ControlInput: L_f V(\eta,z) + L_g V(\eta,z) u + c V(\eta)  \leq 0 \}, 
\end{align}
which has the control values that result in $\dot{V} \leq - c V$.

\newsec{RES-CLF.} 
We can impose stronger bounds on convergence
 by constructing a {\it rapidly exponentially stabilizing control lyapunov function (RES-CLF)} that can be used to stabilize the output dynamics at a rapid rate through a user defined $\epsilon > 0$. 
 \begin{definition}
  For the family of continuously differentiable functions, $V_\epsilon : \R^{k_1+2 k_2} \to \R_{\geq 0} $ is a \textbf{rapidly exponentially stabilizing control Lyapunov function (RES-CLF)}, if there exist positive constants $c_1,c_2,c_3 > 0$ such that  for all $0 < \epsilon < 1$ and for all $\eta,z$,
\begin{align}
\label{eq:boundsonVe}
& c_1 \|\eta\|^2 \leq V_\epsilon (\eta) \leq \frac{c_2}{\epsilon^2} \| \eta \|^2 ,  \\
 & \inf_{u \in \ControlInput} \lbrack L_f V_\epsilon(\eta,z) + L_g V_\epsilon (\eta,z) u  + \frac{c_3}{\epsilon} V_\epsilon (\eta) \rbrack \leq 0. \nonumber
\end{align}
 \end{definition} 
Therefore, we can define a class of controllers $K_\epsilon$:
\begin{align}
\label{eq:classinputg}
\K_\epsilon(\eta,z) = \{ u \in \ControlInput : L_f V_\epsilon (\eta,z) + L_g V_\epsilon (\eta,z)  u +\frac{c_3}{\epsilon} V_\epsilon (\eta) \leq 0 \},
\end{align}
which yields the set of control values that satisfies the desired convergence rate. Note that the set depends on the domain, due to the dependency on $f ,g $. Therefore, the set with the domain subscript is denoted as $\K_{\epsilon}$.



\newsec{Time Dependent  RES-CLF.}  
Given \eqref{eq:d2o}, if the desired outputs are parameterized by the time based phase variable instead of the state based phase variable, we have the following output representation
\begin{align}
\label{eq:d1ot}
y^t_{1} (q,\dq) = y^a_{1}(q,\dq) - y^d_{1}(\tau(t),\alpha),
\end{align}
for velocity outputs and
\begin{align}
\label{eq:d2ot}
y^t_{2} (q) = y^a_{2}(q) - y^d_{2}(\tau(t),\alpha),
\end{align}
for relative degree two (pose) outputs. The outputs are derived from \eqref{eq:d1o},\eqref{eq:d2o} where the phase is now dependent on time $\tau(t)$. With the absence of parameterization for the velocity outputs, we have : $y^t_{1}  (q,\dq) = y_{1} (q,\dq)$. The resulting output dynamics is obtained by taking the derivative:
\begin{align}
\label{eq:timeoutputs}
 \dot{y}^t_{1} (q,\dq) &=  L_f y^a_{1}(q,\dq) + L_g y^a_{1}(q,\dq) u  - \dot{y}^d_{1}(\tau(t),\dot \tau(t),\alpha) \\
\ddot{y}^t_{2} (q,\dq) &=  L_f^2 y^a_{2}(q) + L_g L_f y^a_{2}(q)u - \ddot{y}^d_{2}(\tau(t),\dot \tau(t), \ddot \tau(t), \alpha). \nonumber 
\end{align}

In order to drive the time dependent outputs $$\eta_t (x) = \begin{bmatrix} y^t_{1} (x) \\ \eta_{2,t} (x) \end{bmatrix} = \begin{bmatrix} y^t_1 (q,\dq) \\  y^t_2 (\q) \\ \dot{y}^t_2 (q,\dq) \end{bmatrix} \to 0,$$ we can choose $u$  via \textbf{ time dependent RES-CLF}s (similar to \eqref{eq:classinputg}) in the following manner:
\begin{align}
\label{eq:classinputgt}
 	\K^t_\epsilon(\eta_t,z_t)& = \{u_t \in \ControlInput :  L_f V^t_\epsilon(\eta_t) + L_g V^t_\epsilon(\eta_t) u_t + \frac{\gamma}{\epsilon} V^t_\epsilon(\eta_t) \leq 0 \}.
\end{align}
This set of controllers with the domain representation will hence be denoted as $\K^t_{\epsilon}$. A particular control solution that belongs to the set $\K^t_\epsilon$ can be obtained via feedback linearization:
\begin{align}
\label{eq:fblint}
 u_t = \begin{bmatrix} L_g y^a_{1} \\  L_g L_f y^a_{2}\end{bmatrix} ^{-1} \left ( - \begin{bmatrix} L_f y^a_1 \\ L_f^2 y^a_2 \end{bmatrix} + \begin{bmatrix}  \dot{y}^d_{1} \\ \ddot{y}^d_{2} \end{bmatrix} + \mu_t \right ),
\end{align}
where $\mu_t$ is the auxiliary time based control input after feedback linearization that can be appropriately chosen. If the desired velocity outputs have no parameterization, then $\dot y^d_1 =0$ (derivative of a constant).
\noindent The time based output dynamics can be written in normal form as
\begin{align}\label{eq:dynamicsetat}
\dot{\eta}_t = F \eta_t + G \mu_t, \:\:\:\: \dot{z}_t = \Psi_t(\eta_t,z_t).
\end{align}
$z_t$ are the set of zero dynamic coordinates normal to $\eta_t$ and has the invariant dynamics $\dot{z}_t = \Psi_t(0,z_t)$. Also note that the matrices $F,G$ are the same as the matrices used for the state based coordinates in \eqref{eq:dynamicseta}. For the time based states, $\eta_t,z_t$, we have the diffeomorphism: $\Phi_t(x)=(\eta_t(x),z_t(x))$.

\newsec{RES-CLFs Obtained From Feedback Linearization.} Due to the difficulty in obtaining Lyapunov functions (in particular, control Lyapunov functions) for nonlinear systems, we use the linear dynamics \eqref{eq:dynamicseta},\eqref{eq:dynamicsetat} obtained from feedback linearization to realize both state and time dependent RES-CLFs. Rapid exponential convergence is obtained by appropriately choosing $\mu,\mu_t$ for \eqref{eq:dynamicseta},\eqref{eq:dynamicsetat} respectively. This is mainly discussed in \cite{TAC:amesCLF} and will be explained in brief here.

Let $F,G$ be defined as in \eqref{eq:dynamicseta},\eqref{eq:dynamicsetat}, and let $P$ be the solution to the CARE (control algebraic Riccati equation)
 \begin{align}
  F^TP + P F -  P G G^T P +  Q = 0,
 \end{align}
 for some $Q=Q^T >0$. Since $\gamma P \leq Q$, where $\gamma = \frac{\lambda_{min}(Q)}{\lambda_{max}(P)} >0$. $\lambda_{min}(.),\lambda_{max}(.)$ denote the minimum and maximum eigenvalues of a given symmetric matrix respectively. By choosing $\epsilon>0$ and letting $P_\epsilon := \begin{bmatrix}
 \frac{1}{\epsilon} I & 0 \\
   0 & \frac{1}{\epsilon} I  
\end{bmatrix} P \begin{bmatrix}
 \frac{1}{\epsilon} I & 0 \\
   0 & \frac{1}{\epsilon} I  
\end{bmatrix}$ and $Q_\epsilon : = \begin{bmatrix}
 \frac{1}{\epsilon} I & 0 \\
   0 & \frac{1}{\epsilon} I  
\end{bmatrix} Q \begin{bmatrix}
 \frac{1}{\epsilon} I & 0 \\
   0 & \frac{1}{\epsilon} I  
\end{bmatrix}$, the following is satisfied
 \begin{align}\label{eq:epcare}
  F^TP_\epsilon + P_\epsilon F - \frac{1}{\epsilon} P_\epsilon G G^T P_\epsilon + \frac{1}{\epsilon} Q_\epsilon =0.
 \end{align}
Motivated by \eqref{eq:epcare}, we can construct the following Lyapunov function:
\begin{eqnarray}
\label{eq:RESCLF}
	 V_{\epsilon}(\eta) =  \eta^TP_{\epsilon}\eta  .
\end{eqnarray}
Differentiating \eqref{eq:RESCLF} yields
\begin{align}
 \label{eq:LfLg}
&\dot{V}_{\epsilon}(\eta) = L_F V_{\epsilon}(\eta) + L_G V_{\epsilon}(\eta) \mu,\nonumber \\
&L_F V_{\epsilon}(\eta)  = \eta^T (F^T P_{\epsilon} + P_{\epsilon}F) \eta, \:\:\: L_G V_{\epsilon}(\eta) = 2\eta^T P_{\epsilon} G.
\end{align}
Here $\mu$ can be picked from the set given below:
\begin{align}
\label{eq:classinputgl}
 V_\epsilon(\eta)& = \eta^T P_\epsilon \eta,  \\
 	\K_\epsilon(\eta)& = \{\mu\in\R^m :  L_F V_\epsilon(\eta) + L_G V_\epsilon(\eta) \mu + \frac{\gamma}{\epsilon} V_\epsilon(\eta) \leq 0 \}.\nonumber
\end{align}
The above controller drives the state dependent outputs rapidly exponentially to zero, $\dot{V}_\epsilon \leq - \frac{\gamma}{\epsilon} V_\epsilon$, which is required by the conditions of \eqref{eq:boundsonVe}. Therefore, $V_\epsilon$ is a valid RES-CLF.
In a similar fashion, $\mu_t$ can be picked from the set given below:
\begin{align}
\label{eq:classinputgtl}
 V^t_\epsilon(\eta_t)& = \eta_t^T P_\epsilon \eta_t,  \\
 	\K^t_\epsilon(\eta_t)& = \{\mu_t\in\R^m :  L_F V^t_\epsilon(\eta_t) + L_G V^t_\epsilon(\eta_t) \mu_t + \frac{\gamma}{\epsilon} V^t_\epsilon(\eta_t) \leq 0 \}.\nonumber
\end{align}

The above controller drives the time dependent outputs rapidly exponentially to zero, $\dot{V}^t_\epsilon \leq - \frac{\gamma}{\epsilon} V^t_\epsilon$.
$L_G, L_F$ are the Lie derivatives that are similar to \eqref{eq:LfLg} (see \cite{TAC:amesCLF}):
\begin{align}
L_F V^t_\epsilon &= \eta_t^T ( F^T P_\epsilon + P_\epsilon F) \eta_t  , \:\:\: L_G V^t_\epsilon = 2 \eta_t^T P_\epsilon G.
\end{align}

For limiting the use of notations, the classes of controllers defined in \eqref{eq:classinputgt} and \eqref{eq:classinputgtl} are both denoted by $\K^t_\epsilon$ with the difference being the dependency on the number of arguments. Therefore, a standard time based RES-CLF without substituting \eqref{eq:fblint} would be denoted by $\K^t_\epsilon(\eta_t,z_t)$, and the time dependent RES-CLF that utilizes the auxiliary input $\mu_t$ after substituting \eqref{eq:fblint} would be denoted by $\K^t_\epsilon(\eta_t)$. Similarly, a state based RES-CLF without the substitution of \eqref{eq:fblin} is denoted by $\K_\epsilon(\eta,z)$, and with the substitution would be denoted by $\K_\epsilon(\eta)$. To summarize, the main control input $u$ is a function of two arguments, while the auxiliary control input $\mu$ is a function of only one, the normal coordinates $\eta$.

  \section{Preliminaries on Input to State Stability}
\label{app:iss}
In this appendix we will introduce basic definitions and results related to input-to-state stability (ISS) for a general nonlinear system. See \cite{sontag2008input} for a detailed survey on ISS. Consider the following differential equation:
\begin{align}
\label{eq:system}
\dot{x} =& f(x,d),
\end{align}
with $x$ taking values in Euclidean space $\R^n$, the input $d \in \R^m$ for some positive integers $n,m$. The mapping $f:\R^n \times \R^m \to \R^n $ is considered Lipschitz continuous and $f(0,0)=0$. Note that the dimension of the state for the robots considered are of dimension $2n$, $x\in\R^{2n}$. 

It can be observed that the input considered here is $d$. Therefore, the construction is such that a stabilizing controller $u=k(x)$ is applied. Any deviation from this stabilizing controller can be viewed as $k(x)+d$, with $d$ being the new disturbance input. 
We assume that $d:\R_{\geq 0} \to \R^m$ is a Lebesgue measurable function of time: $\|d\|_\infty = \mathrm{ess.} \: \mathrm{sup}_{t \geq 0} \| d(t) \| < \infty$. We can denote this space of Lebesgue measurable functions as $\mathbb{L}^m_\infty$, and therefore $d\in\mathbb{L}^m_\infty$.

\newsec{Class $\mathcal{K}_\infty$ and $\mathcal{KL}$ functions.} 
A class $\mathcal{K}_\infty$ function is a function $\alpha:\R_{\geq 0} \to \R_{\geq 0}$ which is continuous, strictly increasing, unbounded, and satisfies $\alpha(0) = 0$, and a class $\mathcal{KL}$ function is a function $\beta:\R_{\geq 0} \times \R_{\geq 0} \to \R_{\geq 0}$ such that $\beta(r, t) \in \mathcal{K}_\infty$ for each $t$ and $\beta(r, t) \to 0$ as $t \to \infty$.

%

We can now define input to state stability for system \eqref{eq:system}.
\gap
\begin{definition}
 The system \eqref{eq:system} is input to state stable (ISS) if there exists $\beta \in \mathcal{KL}$, $\iota \in \mathcal{K}_\infty$ such that
 \begin{align}\label{eq:ISSmaindefinition}
  |x(t,x_0)| \leq \beta(|x_0|,t) + \iota(\|d\|_\infty), & \hspace{10mm} \forall x_0,d, \forall t \geq 0,
 \end{align}
 and \eqref{eq:system} is considered locally ISS, if the inequality \eqref{eq:ISSmaindefinition} is valid for an open ball of radius $r$, $x_0 \in \mathbb{B}_r (0)$.
\end{definition}
\gap
\begin{definition}
 The system \eqref{eq:system} is exponential input to state stable (e-ISS) if there exists $\beta \in \mathcal{KL}$, $\iota \in \mathcal{K}_\infty$ and a positive constant $\lambda > 0$ such that
 \begin{align}\label{eq:ISSmainexpodefinition}
 |x(t,x_0)| \leq \beta(|x_0|,t) e^{-\lambda t} + \iota(\|d\|_\infty), & \hspace{5.5mm} \forall x_0,d, \forall t \geq 0,
 \end{align}
 and \eqref{eq:system} is considered locally e-ISS, if the inequality \eqref{eq:ISSmainexpodefinition} is valid for an open ball of radius $r$, $x_0 \in B_r (0)$.
\end{definition}
\gap


 \begin{definition}
  The system is said to hold the asymptotic gain (AG) property if there exists $\iota \in \mathcal{K}_\infty$ such that
  \begin{align}\label{eq:AG}
   \overline \lim_{t \to \infty} |x(t,x_0)| \leq \iota (\|d\|_\infty), & \hspace{10mm} \forall x_0,d.
  \end{align}
  \end{definition}
\gap  
 \begin{definition}
 The system is said to be zero stable if there exists $\beta \in \mathcal{KL}$ such that:
 \begin{align}\label{eq:ZS}
 |x(t,x_0)| \leq \beta(|x_0|,t), & \hspace{10mm} \forall x_0,d, \forall t \geq 0.
 \end{align}
 \end{definition}
 The AG and ZS property are both pictorially shown in \figref{fig:zsag}.

\newsec{ISS-Lyapunov functions.} We can develop Lyapunov functions that satisfy the ISS conditions and achieve the stability property.
\gap
\begin{definition}
A smooth function $V:\R^n \to \R_{\geq 0}$ is an ISS-Lyapunov function for \eqref{eq:system}  if there exist functions $ \underline \alpha$, $\bar \alpha$, $\alpha$, $\iota \in \mathcal{K}_\infty$ such that
\begin{align}
\label{eq:ISSd}
&\underline \alpha(|x|) \leq V (x)  \leq \bar \alpha (|x|)  \nonumber \\
&\dot{V}(x,d)  \leq - \alpha(|x|) \quad \mathrm{for} |x| \geq \iota (\|d\|_\infty).
\end{align}
\end{definition}
\gap
\noindent The following lemma establishes the relationship between the ISS-Lyapunov function and the ISS of \eqref{eq:system}.
\gap
\begin{lemma}
\label{thm:ISSTheorem}
The system \eqref{eq:system} is ISS if and only if it admits a smooth ISS-Lyapunov function.
\end{lemma}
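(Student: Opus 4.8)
The plan is to establish the two implications separately: sufficiency is a fairly routine comparison-lemma argument, whereas necessity is the substantive part and rests on a converse Lyapunov theorem.

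\emph{Sufficiency.} Suppose $V$ is a smooth ISS-Lyapunov function, with $\underline\alpha(|x|) \le V(x) \le \bar\alpha(|x|)$ and $\dot V(x,d) \le -\alpha(|x|)$ whenever $|x| \ge \iota(\|d\|_\infty)$. First I would re-express the dissipation inequality purely in terms of $V$: since $V(x) \le \bar\alpha(|x|)$ implies $|x| \ge \bar\alpha^{-1}(V(x))$, we get $\dot V \le -\alpha\circ\bar\alpha^{-1}(V) =: -\alpha_3(V)$ with $\alpha_3 \in \mathcal{K}_\infty$, valid on the region $\{V(x) \ge \bar\alpha(\iota(\|d\|_\infty))\}$ (on which $\bar\alpha(|x|) \ge V(x) \ge \bar\alpha(\iota(\|d\|_\infty))$ forces $|x| \ge \iota(\|d\|_\infty)$). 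Next I would invoke the comparison lemma for $\dot y = -\alpha_3(y)$, which yields a $\beta_0 \in \mathcal{KL}$ such that $V(x(t)) \le \beta_0(V(x_0),t)$ for as long as the trajectory remains in that region, and observe that the sublevel set $\{V \le \bar\alpha(\iota(\|d\|_\infty))\}$ is forward invariant because $\dot V < 0$ on its boundary. Patching the two regimes gives $V(x(t)) \le \beta_0(V(x_0),t) + \bar\alpha(\iota(\|d\|_\infty))$; sandwiching with $\underline\alpha$ and $\bar\alpha$ and using $\underline\alpha^{-1}(a+b) \le \underline\alpha^{-1}(2a) + \underline\alpha^{-1}(2b)$ then produces \eqref{eq:ISSmaindefinition} with $\beta(r,t) = \underline\alpha^{-1}(2\beta_0(\bar\alpha(r),t)) \in \mathcal{KL}$ and a suitably redefined $\iota \in \mathcal{K}_\infty$.

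\emph{Necessity.} Assuming \eqref{eq:system} is ISS, the approach is to first upgrade the input--state estimate to a \emph{robust} asymptotic stability statement: one selects $\rho \in \mathcal{K}_\infty$ growing slowly enough that every solution of the differential inclusion $\dot x \in \{ f(x,e) : |e| \le \rho(|x|) \}$ converges uniformly to the origin, the point being that $|e(t)| \le \rho(|x(t)|)$ lets one absorb the input term of \eqref{eq:ISSmaindefinition} into the state and bound $|x(t)|$ by a $\mathcal{KL}$ function of $|x_0|$ alone, exploiting the semigroup property of trajectories. Then I would apply the converse Lyapunov theorem for robustly asymptotically stable systems (Lin--Sontag--Wang), obtaining a smooth $V$ with $\underline\alpha(|x|) \le V(x) \le \bar\alpha(|x|)$ and $\nabla V(x)\cdot f(x,e) \le -\alpha(|x|)$ for all $|e| \le \rho(|x|)$. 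Finally, for an arbitrary measurable $d$, whenever $|x| \ge \rho^{-1}(\|d\|_\infty) =: \iota(\|d\|_\infty)$ one has $|d| \le \|d\|_\infty \le \rho(|x|)$, so the robust decrease bound applies and $\dot V(x,d) \le -\alpha(|x|)$; hence $V$ satisfies \eqref{eq:ISSd} and is an ISS-Lyapunov function.

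The main obstacle is entirely in the necessity direction. Both the passage from the ISS input--state inequality to the robust differential-inclusion stability property, and the converse Lyapunov construction itself (which requires smoothing a non-smooth, value-function-type candidate while preserving the $\mathcal{K}_\infty$ bounds and the strict decrease) are delicate; for these I would rely on \cite{sontag2008input} together with the underlying Sontag--Wang and Lin--Sontag--Wang results rather than reproduce the construction here.
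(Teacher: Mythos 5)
Your outline is correct, but be aware that the paper does not actually prove this lemma: its entire ``proof'' is the sentence deferring to \cite{sontag2008input} and \cite{sontag1989smooth}. So the honest comparison is between your sketch and the classical literature the paper points to, and on that score you have reproduced the standard argument faithfully. Your sufficiency direction is the usual comparison-lemma proof: converting the decrease condition of \eqref{eq:ISSd} into $\dot V \le -\alpha\circ\bar\alpha^{-1}(V)$ outside the sublevel set $\{V \le \bar\alpha(\iota(\|d\|_\infty))\}$, noting that set is forward invariant because $\dot V<0$ on its boundary, and patching the two regimes before sandwiching with $\underline\alpha,\bar\alpha$ to recover \eqref{eq:ISSmaindefinition}. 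Your necessity direction correctly isolates where the real work is: the reduction of ISS to robust asymptotic stability of the inclusion $\dot x \in \{f(x,e): |e|\le\rho(|x|)\}$ for a sufficiently slowly growing $\rho\in\mathcal{K}_\infty$, followed by the Lin--Sontag--Wang converse Lyapunov theorem and the trivial conversion back via $\iota=\rho^{-1}$; this is precisely the Sontag--Wang route that the cited survey follows, and deferring those two steps to the references is reasonable since smoothing the value-function candidate while preserving the $\mathcal{K}_\infty$ sandwich and strict decrease is genuinely delicate. Two small points to keep straight if you were to flesh this out: the paper states both \eqref{eq:ISSmaindefinition} and \eqref{eq:ISSd} with $\|d\|_\infty$ rather than the pointwise value $|d(t)|$, so you should say explicitly that the implication-form condition with the essential supremum is what your sufficiency argument uses (your constant threshold $\bar\alpha(\iota(\|d\|_\infty))$ depends on it), and that the converse construction naturally yields the pointwise version, which is stronger and hence also gives \eqref{eq:ISSd}; and the $\mathcal{KL}$ bound from the comparison lemma requires $\alpha_3$ only to be positive definite and locally Lipschitz, which your composition $\alpha\circ\bar\alpha^{-1}$ satisfies, so no additional hypothesis is hidden there. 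In short: your proposal supplies the logical skeleton the paper omits, while relying on the same underlying results the paper cites.
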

\gap
\noindent Proof of Lemma \ref{thm:ISSTheorem} is given in \cite{sontag2008input} and in \cite{sontag1989smooth}. In fact the inequality condition can be made stricter by using the exponential estimate:
\begin{align}
\label{eq:ISSdstricter}
\dot{V}(x,d)  \leq - c V(x) + \iota (\|d\|_\infty),	 & \hspace{10mm} \forall x,d.
\end{align}
which is then called the e-ISS Lyapunov function.

\noindent We can also use the AG propery \eqref{eq:AG} to establish ISS:
\begin{lemma}\label{lm:AG}
 The system is ISS if and only if it is zero stable and AG.
\end{lemma}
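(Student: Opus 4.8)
The plan is to establish the two implications of Lemma \ref{lm:AG} separately, the first being routine and the second being the substantive one. For the forward implication, assume \eqref{eq:system} is ISS with estimate \eqref{eq:ISSmaindefinition} for some $\beta\in\mathcal{KL}$, $\iota\in\mathcal{K}_\infty$. Setting $d\equiv 0$ forces $\|d\|_\infty=0$ and hence $\iota(\|d\|_\infty)=0$, so \eqref{eq:ISSmaindefinition} collapses to $|x(t,x_0)|\leq\beta(|x_0|,t)$, which is precisely the zero-stability estimate \eqref{eq:ZS}. For the asymptotic gain, fix $x_0$ and $d$ and take $\overline \lim_{t\to\infty}$ on both sides of \eqref{eq:ISSmaindefinition}; since $\beta(|x_0|,t)\to 0$ as $t\to\infty$, the first term drops out and we recover \eqref{eq:AG} with the same $\iota$. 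Thus ISS implies zero stability and AG.

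For the converse, assume \eqref{eq:system} is zero stable with some $\beta\in\mathcal{KL}$ and has the AG property with some $\iota\in\mathcal{K}_\infty$; we must produce a single pair $(\tilde\beta,\tilde\iota)$ yielding \eqref{eq:ISSmaindefinition}. The idea is to split the time axis into a transient phase $[0,T]$ and a tail $[T,\infty)$: on the transient the initial-condition contribution should be dominated by a $\mathcal{KL}$ term, while on the tail the state should already lie within a neighbourhood of radius $\iota(\|d\|_\infty)$ as dictated by AG. Two ingredients are needed. First, by causality the system may be restarted at any time $s$ from $x(s,x_0)$ driven by the shifted input $d(\cdot+s)$, whose sup-norm is no larger than $\|d\|_\infty$; combining this with zero stability applied over short intervals and a standard boundedness argument shows that trajectories starting in a bounded set under a bounded input stay bounded (an a priori uniform boundedness step). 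Second, one must upgrade AG --- which a priori only gives, for each $x_0$ and $d$, some time after which $|x(t,x_0)|\leq\iota(\|d\|_\infty)$ with no control on that time --- to a \emph{uniform} asymptotic gain: there is $\tau=\tau(r,\varepsilon)$ such that $|x_0|\leq r$ and $t\geq\tau$ imply $|x(t,x_0)|\leq\iota(\|d\|_\infty)+\varepsilon$ for all $d$. With uniform boundedness and a uniform asymptotic gain in hand, the desired $\tilde\beta\in\mathcal{KL}$ is assembled from the boundedness bound on $[0,\tau]$ and the decay on $[\tau,\infty)$ by the usual construction of a $\mathcal{KL}$ majorant, and $\tilde\iota$ may be taken to be $\iota$.

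I expect the main obstacle to be exactly this upgrade from AG to a uniform asymptotic gain: that is where a compactness-in-initial-conditions argument together with causality is required, and where the possibility of arbitrarily slow convergence must be ruled out. This is the technical heart of the Sontag--Wang characterization, and given the scope of the present note the cleanest route is to invoke it directly --- Lemma \ref{lm:AG} is a known equivalence, with a full proof in \cite{sontag2008input} --- so I would write out the forward direction as above, sketch the two ingredients of the converse so the reader sees why zero stability and AG together suffice, and cite \cite{sontag2008input} for the remaining details.
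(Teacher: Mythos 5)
Your proposal is correct and takes essentially the same route as the paper: the paper states Lemma \ref{lm:AG} with no proof at all, implicitly deferring to the Sontag--Wang characterization surveyed in \cite{sontag2008input}, which is exactly the citation you fall back on for the converse. Your explicit proof of the forward implication (setting $d\equiv 0$ for zero stability, taking $\overline\lim_{t\to\infty}$ in \eqref{eq:ISSmaindefinition} for AG) and your identification of the upgrade from AG to a uniform asymptotic gain as the technical heart of the converse are both accurate, and in fact record more than the paper does.
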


\begin{figure}
\centering
	\subfloat{
		\includegraphics[width= 0.4\columnwidth]{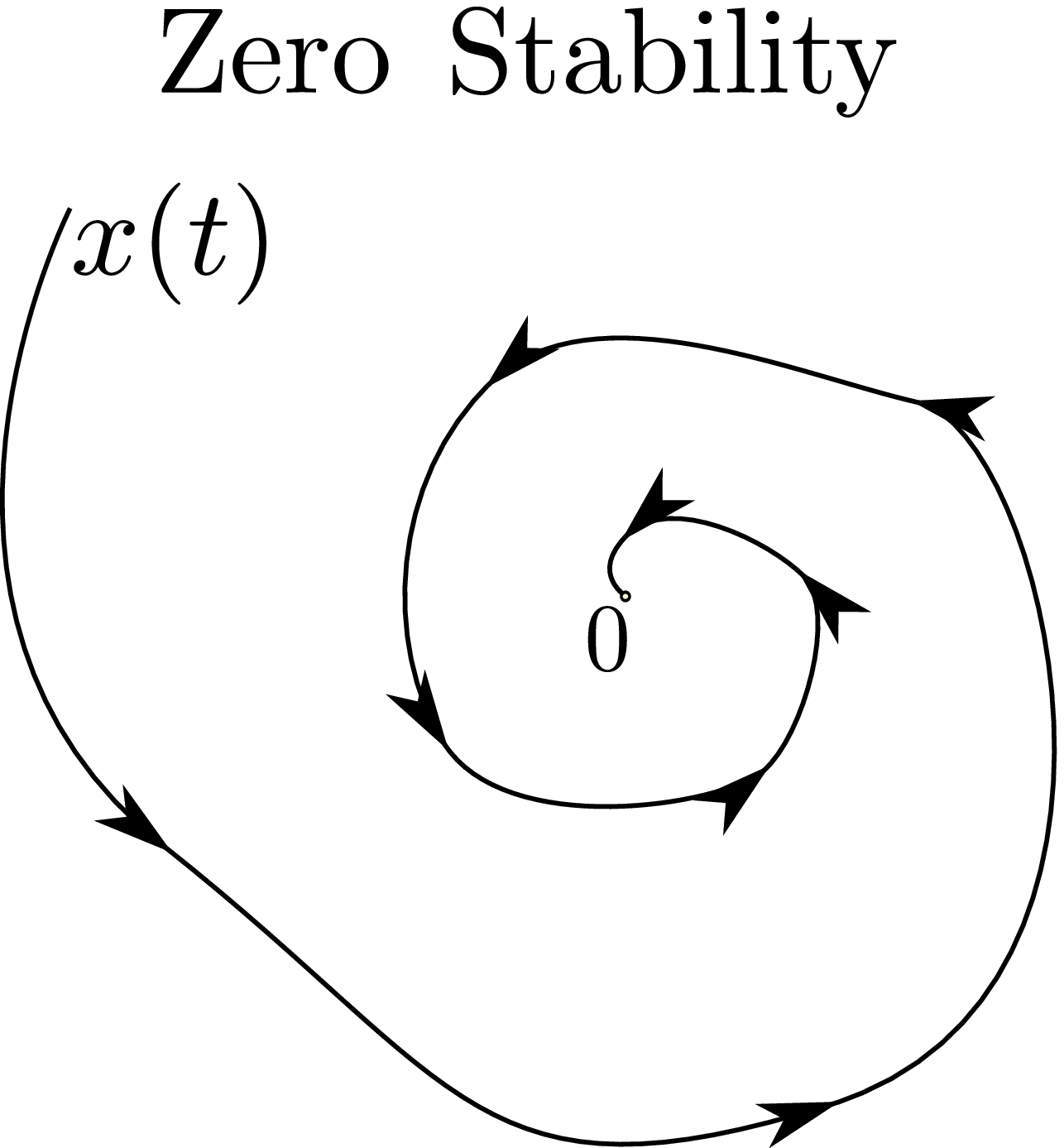}
	}
	\subfloat {
	 \includegraphics[width= 0.4\columnwidth]{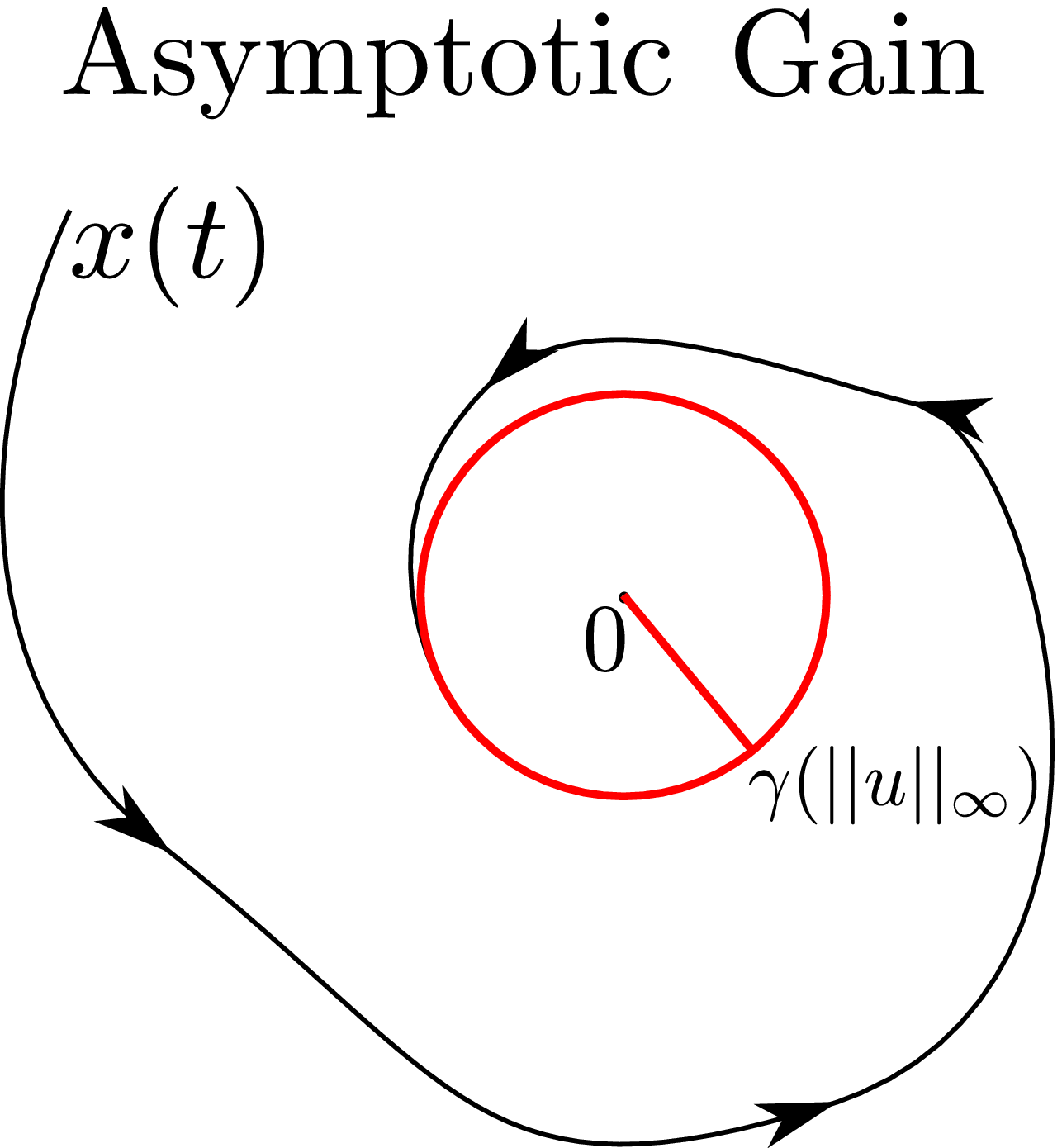}
	}
	\caption{If the system is ISS, zero stability is achieved for a zero input, and asymptotic gain is achieved for a bounded input. }
	\label{fig:zsag}
\end{figure}

  \section{Phase to State Stability}
\label{app:pssc}
We can now define the notion of {\it phase to state stability} for continuous sytems below. A preliminary on ISS is given in Appendix \ref{app:iss}.
Systems considered are of the type 
\begin{align}\label{eq:dynamicsdhere}
 \dot \eta &= F \eta + G \mu(\eta)  + G d \nonumber \\
 \dot z &= \Psi(\eta,z),
\end{align}
where a suitable Lipschitz control law $\mu(\eta) \in \K_{\epsilon}(\eta)$ is applied. Since, the analysis is only for continuous dynamics, the domain notation is ignored. A suitable control law would have been $\mu(\eta)$, which is state based, but instead, a time based control law, $\mu(\eta)+d$, was applied. The time dependency is implicit in the disturbance input $d$.

\gap
\begin{definition}
 Assume a ball of radius $r$ around the origin. The system given by \eqref{eq:dynamicsdhere} is locally  \textbf{phase to $\eta$ stable }, if there exists $\beta \in \classKL$, and $\iota \in \classK_\infty$ such that
  \begin{align}\label{eq:ISSmaindefinitionPhase}
  |\eta(t)| \leq \beta(|\eta(0)|,t) + \iota(\|d\|_\infty), &  \forall \eta(0) \in \mathbb{B}_r(0),\forall d, \forall t \geq 0,
 \end{align}
 and it is locally \textbf{phase to state stable }, if
 \begin{align}
  |(\eta(t),z(t))| \leq \beta(|(\eta(0),z(0))|,t) + \iota(\|d\|_\infty), & \nonumber \\ \forall  \eta(0) \in \mathbb{B}_r(0), \forall d, \forall t \geq 0.  \nonumber
 \end{align}
\end{definition}
\gap

We will first establish phase to $\eta$ stability, and then include the zero dynamics to show phase to state stability.
Based on the asymptotic gain and zero stability property of the system \eqref{eq:dynamicsdhere} w.r.t. the {\it phase uncertainty} $d$, we have the following lemma.
\gap
\begin{lemma}
\label{thm:exponcon}
{\it The class of all Lipschitz continuous feedback control laws $\mu(\eta) \in \K_\epsilon(\eta)$ applied on the system \eqref{eq:dynamicsdhere} yields phase to $\eta$ stability in the continuous dynamics. 
}
\end{lemma}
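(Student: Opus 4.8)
The plan is to show that the $\eta$-subsystem of \eqref{eq:dynamicsdhere} is exponentially input-to-state stable with respect to the phase uncertainty $d$ by exhibiting $V_\epsilon(\eta)=\eta^{T}P_\epsilon\eta$ from \eqref{eq:RESCLF} as an e-ISS Lyapunov function, and then invoking Lemma~\ref{thm:ISSTheorem} together with the exponential estimate \eqref{eq:ISSdstricter}. (Equivalently, one could verify zero stability and the asymptotic gain property directly and appeal to Lemma~\ref{lm:AG}, but the Lyapunov route is shorter and yields the sharper exponential bound.)

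First I would differentiate $V_\epsilon$ along the perturbed flow $\dot\eta=F\eta+G\mu(\eta)+Gd$. Using \eqref{eq:LfLg}, $\dot V_\epsilon=L_FV_\epsilon(\eta)+L_GV_\epsilon(\eta)\mu(\eta)+L_GV_\epsilon(\eta)d$. Since $\mu(\eta)\in\K_\epsilon(\eta)$, the defining inequality in \eqref{eq:classinputgl} yields $L_FV_\epsilon(\eta)+L_GV_\epsilon(\eta)\mu(\eta)\le-\frac{\gamma}{\epsilon}V_\epsilon(\eta)$ pointwise in $\eta$; crucially this bound holds for \emph{every} admissible value of $\mu$, so the argument is uniform over the entire class of controllers named in the statement. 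Hence $\dot V_\epsilon\le-\frac{\gamma}{\epsilon}V_\epsilon(\eta)+L_GV_\epsilon(\eta)d$.

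Next I would dominate the cross term. Because $L_GV_\epsilon(\eta)=2\eta^{T}P_\epsilon G$, we have $L_GV_\epsilon(\eta)d\le 2\|P_\epsilon G\|\,\|\eta\|\,\|d\|_\infty$, and Young's inequality with weight $\frac{\gamma c_1}{2\epsilon}$, combined with the lower bound $c_1\|\eta\|^2\le V_\epsilon(\eta)$ from \eqref{eq:boundsonVe}, gives $L_GV_\epsilon(\eta)d\le\frac{\gamma}{2\epsilon}V_\epsilon(\eta)+\frac{2\epsilon}{\gamma c_1}\|P_\epsilon G\|^2\|d\|_\infty^2$. Setting $\iota(s):=\frac{2\epsilon}{\gamma c_1}\|P_\epsilon G\|^2 s^2\in\classK_\infty$, this collapses to $\dot V_\epsilon\le-\frac{\gamma}{2\epsilon}V_\epsilon(\eta)+\iota(\|d\|_\infty)$, which is exactly the e-ISS Lyapunov inequality \eqref{eq:ISSdstricter} with rate $c=\frac{\gamma}{2\epsilon}$. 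By the comparison lemma (equivalently Lemma~\ref{thm:ISSTheorem}), $V_\epsilon(\eta(t))\le e^{-\frac{\gamma}{2\epsilon}t}V_\epsilon(\eta(0))+\frac{2\epsilon}{\gamma}\iota(\|d\|_\infty)$, and sandwiching via $c_1\|\eta\|^2\le V_\epsilon\le\frac{c_2}{\epsilon^2}\|\eta\|^2$ converts this into $|\eta(t)|\le\beta(|\eta(0)|,t)+\tilde\iota(\|d\|_\infty)$ with $\beta\in\classKL$ (indeed exponentially decaying) and $\tilde\iota\in\classK_\infty$, which is \eqref{eq:ISSmaindefinitionPhase}; restricting initial data to $\mathbb{B}_r(0)$ gives the local statement.

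The points requiring care rather than depth are: (i) the estimate must hold uniformly over the set-valued class $\K_\epsilon(\eta)$ and not for one fixed feedback, which is automatic since the RES-CLF inequality holds for each admissible control value; (ii) Lipschitz continuity of $\mu(\eta)$ is what makes \eqref{eq:dynamicsdhere} admit well-posed, absolutely continuous solutions along which $t\mapsto V_\epsilon(\eta(t))$ is absolutely continuous, so that the differential inequality and the comparison lemma are legitimate; and (iii) the $\epsilon$-dependence carried by $P_\epsilon$ must be tracked so that the resulting gain $\tilde\iota$ and decay rate are genuine class-$\classK_\infty$ and exponential objects. The zero-dynamics coordinate $z$ is left untouched at this stage and is folded in afterwards to upgrade phase to $\eta$ stability to full phase to state stability.
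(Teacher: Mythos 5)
Your proposal is correct, and it is essentially the argument the paper relies on: the paper's own ``proof'' merely cites the companion reference and quotes the ultimate bound, and that bound comes from exactly the computation you perform --- treating the RES-CLF $V_\epsilon(\eta)=\eta^T P_\epsilon \eta$ as an e-ISS Lyapunov function, using the $\K_\epsilon(\eta)$ inequality for the nominal part, bounding the cross term $2\eta^T P_\epsilon G d$, and sandwiching with $c_1\|\eta\|^2 \le V_\epsilon \le \tfrac{c_2}{\epsilon^2}\|\eta\|^2$ (the same computation reappears in the proof of Theorem~\ref{thm:fivep}). Your gain constant differs from the paper's quoted $\tfrac{4c_2}{\gamma c_1 \epsilon}\|d\|_\infty$ only through the choice of how the negative term is split, which is immaterial to the conclusion.
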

\begin{proof}
   Proof is provided in \cite{hscc17running} and is straightforward due to the fact that a stabilizing controller $\mu(\eta)$ is sufficient to render the linear system phase to $\eta$ stable. The ultimate bound on $\eta$ can be further obtained as $\frac{4c_2}{\gamma c_1 \epsilon}\|d\|_\infty$. This ultimate upper bound is computed in \cite{hscc17running}.
%
  \end{proof}
\gap


 We can also realize \textbf{exponential phase to state stability} of the continuous dynamics by appending a state based linear feedback law to the time based feedback linearizing control resulting in the dynamics 
 \begin{eqnarray}
 \label{eq:clflinear2prepeat}
 \dot{\eta} &=& F\eta + G \mu (\eta) + G d + G B_y u_s(\eta,z)  \nonumber \\
	\dot{z} &=& \Psi(\eta,z).
\end{eqnarray}


Lemma \ref{thm:exponcon} can now be redefined to obtain exponential phase to state stability.
\gap
\begin{lemma}
\label{thm:exponcon2p}
{\it Given the Lipschitz continuous control laws $\mu(\eta) \in \K_\epsilon(\eta)$, $u_s(\eta,z) \in \K^s_{\epsilon,\bar\epsilon}(\eta,z)$ applied on the system \eqref{eq:clflinear2prepeat} yields exponential phase to $\eta$ stability in the continuous dynamics. 
}
\end{lemma}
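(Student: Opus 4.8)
The plan is to use the RES-CLF $V_\epsilon(\eta) = \eta^T P_\epsilon \eta$ of \eqref{eq:RESCLF} as an e-ISS Lyapunov function, in the sense of \eqref{eq:ISSdstricter}, for the $\eta$-subsystem of \eqref{eq:clflinear2prepeat}, treating the phase uncertainty $d$ as the disturbance input. First I would differentiate $V_\epsilon$ along \eqref{eq:clflinear2prepeat}, obtaining
\begin{align*}
\dot V_\epsilon = L_F V_\epsilon(\eta) + L_G V_\epsilon(\eta)\big(\mu(\eta) + d + B_y u_s(\eta,z)\big).
\end{align*}
Because $\mu(\eta) \in \K_\epsilon(\eta)$, the terms carrying $\mu$ satisfy $L_F V_\epsilon(\eta) + L_G V_\epsilon(\eta)\,\mu(\eta) \le -\tfrac{\gamma}{\epsilon} V_\epsilon(\eta)$ by \eqref{eq:classinputgl}; and because $u_s(\eta,z) \in \K^s_{\epsilon,\bar\epsilon}(\eta,z)$, the contribution $L_G V_\epsilon(\eta) B_y u_s(\eta,z)$ is bounded above by minus the (nonnegative) decay term built into that controller class, hence is nonpositive and may be dropped. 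This reduces the estimate to $\dot V_\epsilon \le -\tfrac{\gamma}{\epsilon} V_\epsilon(\eta) + L_G V_\epsilon(\eta)\, d$, so the only real work is to dominate the cross term $L_G V_\epsilon(\eta)\, d = 2\eta^T P_\epsilon G\, d$ by a fraction of the decay plus a term in $\|d\|_\infty$.

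Next I would bound the cross term with Young's inequality using the explicit quadratic bounds $c_1\|\eta\|^2 \le V_\epsilon(\eta) \le \tfrac{c_2}{\epsilon^2}\|\eta\|^2$ of \eqref{eq:boundsonVe}: since $\|2\eta^T P_\epsilon G\| \le \tfrac{2c_2}{\epsilon^2}\|G\|\,\|\eta\|$, completing the square gives, for a constant $\kappa$ depending only on $c_1,c_2,\gamma,\|G\|$,
\begin{align*}
2\eta^T P_\epsilon G\, d \le \frac{\gamma}{2\epsilon}\,V_\epsilon(\eta) + \frac{\kappa}{\epsilon^3}\,\|d\|_\infty^2,
\end{align*}
whence $\dot V_\epsilon(\eta) \le -\tfrac{\gamma}{2\epsilon}V_\epsilon(\eta) + \tfrac{\kappa}{\epsilon^3}\|d\|_\infty^2$, which is exactly the e-ISS Lyapunov inequality. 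The comparison lemma then yields $V_\epsilon(\eta(t)) \le e^{-\frac{\gamma}{2\epsilon}t}V_\epsilon(\eta(0)) + \tfrac{2\kappa}{\gamma\epsilon^2}\|d\|_\infty^2$, and passing back through the quadratic bounds and taking square roots gives $|\eta(t)| \le \beta(|\eta(0)|,t)\,e^{-\lambda t} + \iota(\|d\|_\infty)$ with $\lambda$ proportional to $\gamma/\epsilon$, $\beta \in \classKL$ built from the ratio $c_2/(c_1\epsilon^2)$, and $\iota$ linear in its argument --- i.e. exponential phase to $\eta$ stability. Equivalently one may invoke Lemma \ref{thm:ISSTheorem} to package the conclusion, exactly as in Lemma \ref{thm:exponcon}.

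The main obstacle I anticipate is the clean handling of the $u_s(\eta,z)$ term: since it depends on the zero-dynamics coordinates $z$, one must verify that membership in $\K^s_{\epsilon,\bar\epsilon}(\eta,z)$ genuinely forces $L_G V_\epsilon(\eta) B_y u_s(\eta,z)$ to be sign-definite along $V_\epsilon$, rather than a $z$-dependent perturbation that would need to be absorbed into the disturbance term. If $\K^s_{\epsilon,\bar\epsilon}$ is instead defined relative to a composite output-plus-zero-dynamics Lyapunov function, the argument would have to be reworked with a composite function $V_\epsilon(\eta) + \sigma\,V_z(z)$ and a small-gain step coupling $\|\eta\|$ and $\|z\|$; that is the more delicate route, and it is the natural stepping stone toward the full phase-to-state-stability statement.
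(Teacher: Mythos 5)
Your proposal is essentially the paper's intended argument in outline --- the paper itself only sketches the proof (deferring to its reference), but the sketch is exactly ``differentiate $V_\epsilon$ and substitute the upper bounds,'' which is what you do --- yet you diverge at the one point where the added controller $u_s$ matters, and that divergence weakens the result. In the paper's computation (visible inside the proof of Theorem \ref{thm:fivep}), membership of $u_s(\eta,z)$ in $\K^s_{\epsilon,\bar\epsilon}(\eta,z)$ is used to \emph{keep} an extra decay term: the estimate there is $\dot V_\epsilon \leq -\tfrac{\gamma}{\epsilon}V_\epsilon - \tfrac{1}{\bar\epsilon}\bar V_\epsilon + 2\|\eta\|\|P_\epsilon\|\|d\|_\infty$ with $\bar V_\epsilon \geq c_1^2\|\eta\|^2$, and it is this $-\tfrac{1}{\bar\epsilon}c_1^2\|\eta\|^2$ term, used in the gain-margin form \eqref{eq:ISSd}, that rejects the disturbance whenever $\|\eta\| \geq \tfrac{2\bar\epsilon c_2}{c_1^2\epsilon^2}\|d\|_\infty$. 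That is how the paper obtains an ultimate bound proportional to $\bar\epsilon$, i.e.\ tunable to be arbitrarily small independently of $\epsilon$; this $\bar\epsilon$-scaling is precisely what distinguishes Lemma \ref{thm:exponcon2p} from Lemma \ref{thm:exponcon} and is what Theorem \ref{thm:fivep} later invokes. You instead drop the $u_s$ contribution as merely nonpositive (a reasonable reading, and consistent with how the paper uses the class, though $\K^s_{\epsilon,\bar\epsilon}$ is never defined in this manuscript) and absorb the cross term $2\eta^T P_\epsilon G d$ by sacrificing half of the $-\tfrac{\gamma}{\epsilon}V_\epsilon$ decay via Young's inequality. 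Your resulting e-ISS inequality and comparison-lemma step are correct, so the lemma as literally stated (exponential phase to $\eta$ stability) does follow; but your gain $\iota$ depends only on $\epsilon$, so your argument in effect reproves Lemma \ref{thm:exponcon} without ever using $u_s$, and it does not recover the bound $\tfrac{2\bar\epsilon c_2}{c_1^2\epsilon^2}\|d\|_\infty$ stated in the paper's proof and needed downstream. The fix is the ``delicate route'' you yourself flag at the end: retain the $-\tfrac{1}{\bar\epsilon}c_1^2\|\eta\|^2$ term supplied by $u_s$ and use it, rather than the $\tfrac{\gamma}{\epsilon}$ decay, to dominate $2\|\eta\|\|P_\epsilon\|\|d\|_\infty \leq \tfrac{2c_2}{\epsilon^2}\|\eta\|\|d\|_\infty$.
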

  \begin{proof}
Proof is again provided in \cite{hscc17running} which is similar to the proof for Lemma \ref{thm:exponcon}. The ultimate bounds can also be explicitly computed by taking the derivative of the Lyapunov function, $V_\epsilon$ and substituting the upper bounds. The ultimate upper bound on $\eta$ is $\frac{2\bar\epsilon c_2}{c_1^2\epsilon^2}\|d\|_\infty$.
  \end{proof}
\gap


\newsec{Continuous Periodic Orbits.} 
Application of a Lipschitz continuous feedback control law $\mu(\eta) \in \K_\epsilon(\eta)$, $u_s(\eta,z) \in \K^s_{\epsilon,\bar\epsilon}(\eta,z)$ results in the closed loop vector field \eqref{eq:clflinear2prepeat}. Associated with this vector field is a flow that is a function of $\epsilon,\bar\epsilon$ and also the disturbance $d$. Denote the flow as $\phi^{\epsilon,\bar\epsilon,d}_t(\eta,z)$. The flow with zero disturbance ($d(t)\equiv 0$) is periodic with period $T>0$ and a fixed point $(\eta^*,z^*)$ if $\phi^{\epsilon,\bar\epsilon,0}_t(\eta^*,z^*)=(\eta^*,z^*)$. Associated with this periodic flow is the periodic orbit 
\begin{align}
 \mathcal{O} = \{\phi_t(\eta^*,z^*)\in \R^{2n} : 0 \leq t \leq T \} ,
\end{align}
Similarly, we denote the flow of the partial zero dynamics by $\phi^{z}_t(y_1,z)$ and the associated periodic orbit by $\mathcal{O}_\PZ$. The periodic orbit of the partial zero dynamics can embedded into the full order dynamics through the canonical embedding $\Pi_0(y_1,z) = (y_1,0,z)$. Therefore, $\mathcal{O} = \Pi_0(\mathcal{O}_\PZ)$.


For the periodic orbit on the zero dynamics, we have the periodic orbit of the full order dynamics via the canonical embedding $\Pi_0(\mathcal{O}_\PZ) = \mathcal{O}$. 
By defining the norm $\|(\eta,z)\| = \|y_{1}\|+\|\eta_{2}\|+\|z\|$, we can define the distance from the periodic orbit as
\begin{align}
\label{eq:distance}
 \|(\eta,z)\|_\mathcal{O} &= \inf_{(\eta',z')\in\mathcal{O}} \|(\eta,z)-(\eta',z')\|  \\
			  &= \inf_{(y'_{1},z')\in\mathcal{O}_\PZ} \|(z-z')\|+\|(y_{1}-y'_{1})\|  + \|\eta_{2}\| .\nonumber
\end{align}
The continuous dynamics is exponentially stable in each domain if there are constants $r,\delta_1,\delta_2 > 0$ such that if $(\eta_\vi,z_\vi)\in\B_r(\mathcal{O})$, a neighborhood of radius $r$ around $\mathcal{O}$,
it follows that $\|\phi^{\epsilon,\bar\epsilon,d}_(\eta,z)\|_\mathcal{O} \leq \delta_1 e^{-\delta_2 t} \|(\eta_\vi,z_\vi)\|_\mathcal{O}$.
$\|(y_1,z)\|_{\mathcal{O}_\PZ}$, as mentioned in \eqref{eq:distance}, represents the distance between $z$ and nearest point on the periodic orbit $\mathcal{O}_\PZ$. 
Given that the partial zero dynamics has an exponentially stable periodic orbit, there is a Lyapunov function $V_\PZ:\R^{2n} \to \R_{\geq 0}$ such that in a neighborhood $\mathbb{B}_r(\mathcal{O}_\PZ)$ of $\mathcal{O}_\PZ$ (by converse Lyapunov theorem \cite{hauser1994converse}) such that
\begin{align}
\label{eq:zerodp}
c_{4} \|(y_1,z)\|^2_{\mathcal{O}_\PZ} &\leq  V_\PZ(y_1,z) \leq c_{5} \|(y_1,z)\|^2_{\mathcal{O}_\PZ}, \nonumber \\
\frac{\partial V_\PZ}{\partial z} \Psi(y_1,0,z)   +  \frac{\partial V_\PZ}{\partial y_1} \dot{y}_1 &\leq  -c_{6} \|(y_1,z)\|^2_{\mathcal{O}_\PZ}, \nonumber \\
\left | \left | \frac{\partial V_\PZ}{\partial (y_1,z)} \right | \right | & \leq  c_{7} \|(y_1,z)\|_{\mathcal{O}_\PZ}.
\end{align}
Define the composite Lyapunov function: $V_c(\eta,z) = \sigma V_\PZ(y_1,z) +   V_\epsilon (\eta)$, we can establish boundedness of the dynamics of the robot when $\|d\|$ is bounded. In other words, we have the following theorem, which establishes {\it phase to state stability} of periodic orbits in continuous systems.

\gap
\begin{theorem}
\label{thm:fivep}{\it
Given that the periodic orbit $\mathcal{O}_{z}$ of the partial zero dynamics is exponentially stable, and given the controllers $\mu(\eta) \in \K_\epsilon(\eta)$, $u_s(\eta,z) \in \K^s_{\epsilon,\bar\epsilon}(\eta,z)$ applied on \eqref{eq:clflinear2prepeat}, that render the outputs $\eta$ stable w.r.t. $d$, then the periodic orbit $\mathcal{O}=\Pi_0(\mathcal{O}_\PZ)$ obtained from the canonical embedding is {\it exponential phase to state stable }.}
\end{theorem}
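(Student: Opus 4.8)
The plan is to use the composite Lyapunov function $V_c(\eta,z) = \sigma V_\PZ(y_1,z) + V_\epsilon(\eta)$ as an ISS-Lyapunov function for the full-order closed-loop dynamics \eqref{eq:clflinear2prepeat}, with $d$ playing the role of the disturbance input and $\|(\eta,z)\|_\mathcal{O}$ playing the role of the state norm. The upper and lower bounds of the form $\underline\alpha(\|(\eta,z)\|_\mathcal{O}) \leq V_c \leq \bar\alpha(\|(\eta,z)\|_\mathcal{O})$ follow immediately by combining the quadratic bounds \eqref{eq:zerodp} on $V_\PZ$ with the quadratic bounds \eqref{eq:boundsonVe} on $V_\epsilon$ (here I use that $\|\eta\| = \|\eta\|_\mathcal{O}$ up to the decomposition in \eqref{eq:distance}, since the $\eta_2$-component vanishes on $\mathcal{O}$ and $y_1$ is handled inside $V_\PZ$). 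So the entire content of the proof is the derivative estimate, and the goal is to show $\dot V_c \leq -c\, V_c + \iota(\|d\|_\infty)$ for suitable $c>0$ and $\iota \in \classK_\infty$ on a neighborhood $\mathbb{B}_r(\mathcal{O})$, which by Lemma \ref{thm:ISSTheorem} (in its exponential form \eqref{eq:ISSdstricter}) yields exponential phase to state stability.

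\textbf{The derivative computation.} First I would differentiate $V_\epsilon$ along \eqref{eq:clflinear2prepeat}: the controllers $\mu(\eta)\in\K_\epsilon(\eta)$ and $u_s(\eta,z)\in\K^s_{\epsilon,\bar\epsilon}(\eta,z)$ give $\dot V_\epsilon \leq -\frac{\gamma}{\epsilon}V_\epsilon + L_G V_\epsilon\, (d + B_y u_s)$-type terms, which, exactly as in Lemma \ref{thm:exponcon2p}, is dominated using $\|L_G V_\epsilon\| = \|2\eta^T P_\epsilon G\| \leq \frac{2 c_2}{\epsilon^2}\|\eta\|$ and Young's inequality to absorb the cross term into $-\frac{\gamma}{2\epsilon}V_\epsilon$ plus a multiple of $\|d\|_\infty^2$. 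Then I differentiate $\sigma V_\PZ$: writing $\dot z = \Psi(\eta,z) = \Psi(y_1,0,z) + [\Psi(\eta,z)-\Psi(y_1,0,z)]$ and $\dot y_1$ as its value on $\mathcal{O}$ plus a perturbation, the first piece gives $\sigma$ times the strict decrease $-c_6\|(y_1,z)\|^2_{\mathcal{O}_\PZ}$ from \eqref{eq:zerodp}; the perturbation pieces are Lipschitz in $\eta_2$ (vanishing when $\eta_2=0$) and are controlled by $\frac{\partial V_\PZ}{\partial(y_1,z)}$-bound $c_7\|(y_1,z)\|_{\mathcal{O}_\PZ}$, yielding a cross term $\leq \sigma c_7 L_\Psi \|(y_1,z)\|_{\mathcal{O}_\PZ}\|\eta_2\|$. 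This cross term is then split by Young's inequality between the $-\sigma c_6\|(y_1,z)\|^2$ reserve and the $-\frac{\gamma}{2\epsilon}V_\epsilon \geq -\frac{\gamma c_1}{2\epsilon}\|\eta\|^2$ reserve, and one chooses $\sigma>0$ small enough (and $\epsilon$ small enough) so that both quadratic forms remain negative definite. Collecting terms gives $\dot V_c \leq -c_8\|(y_1,z)\|^2_{\mathcal{O}_\PZ} - c_9\|\eta\|^2 + c_{10}\|d\|^2_\infty \leq -c\,V_c + \iota(\|d\|_\infty)$ with $\iota(s) = (c_{10}/c)s^2 \in \classK_\infty$.

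\textbf{The main obstacle} I expect is the standard but delicate bookkeeping of the cross terms and the order in which the small constants $\sigma$, $\epsilon$, $\bar\epsilon$ are fixed: the composite-Lyapunov argument only closes because the $-\frac{\gamma}{\epsilon}V_\epsilon$ term is $O(1/\epsilon)$ while the destabilizing cross term from the zero dynamics coupling is $O(1)$ (after the $\sigma$-weighting), so one must verify the coupling matrix $\begin{bmatrix} c_8 & -\tfrac12\sigma c_7 L_\Psi \\ -\tfrac12\sigma c_7 L_\Psi & c_9\end{bmatrix}$ is positive definite and that this survives the perturbation terms generated by $d$ and $B_y u_s$ — essentially reproducing the classical cascade/ISS-small-gain estimate of Ames et al. A secondary point requiring care is locality: all of \eqref{eq:zerodp} and the Lipschitz constants hold only on $\mathbb{B}_r(\mathcal{O})$, so the final ISS estimate is local, and one should note (as in the preceding lemmas) that the ultimate bound on $\|(\eta,z)\|_\mathcal{O}$ can be made explicit — of the same flavor as $\frac{2\bar\epsilon c_2}{c_1^2\epsilon^2}\|d\|_\infty$ — which also pins down the radius $r$ for which the neighborhood is forward invariant. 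Finally I would invoke Lemma \ref{thm:ISSTheorem} / the exponential estimate \eqref{eq:ISSdstricter} to conclude that \eqref{eq:clflinear2prepeat} is exponential phase to state stable, i.e. the periodic orbit $\mathcal{O} = \Pi_0(\mathcal{O}_\PZ)$ satisfies the bound in the definition with a $\classKL$ function of exponential type.
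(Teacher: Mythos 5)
Your proposal follows essentially the same route as the paper: the composite Lyapunov function $V_c=\sigma V_\PZ + V_\epsilon$, the quadratic sandwich bounds, the Lipschitz cross-term estimate $\sigma c_7 L_q\|(y_1,z)\|_{\mathcal{O}_\PZ}\|\eta\|$, a smallness condition on $\sigma$ (the paper's $c_6 c_1 \frac{\gamma}{\epsilon} - \sigma \frac{c_7^2 L_q^2}{4} > 0$ is exactly your $2\times 2$ positive-definiteness check), and the ISS-Lyapunov characterization to conclude. The only real difference is cosmetic: you absorb the $d$-term by Young's inequality into half of $-\frac{\gamma}{\epsilon}V_\epsilon$ to reach the dissipation form \eqref{eq:ISSdstricter}, whereas the paper uses the extra margin $-\frac{1}{\bar\epsilon}c_1^2\|\eta\|^2$ contributed by $u_s$ to reject the disturbance whenever $\|\eta\|\geq \frac{2\bar\epsilon c_2}{c_1^2\epsilon^2}\|d\|_\infty$, i.e.\ the implication form \eqref{eq:ISSd}; both versions close the argument.
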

\begin{proof} Upper bounds and lower bounds on $V_c$ are given by
\begin{align}
\label{eq:fullvp}
V_c(\eta , z) & \leq
       \max \{\sigma c_5,\frac{c_2}{\epsilon^2} \} ( \|(y_1,z)\|^2_{\mathcal{O}_\PZ} + \|\eta \|^2 ) ,\nonumber \\
V_c(\eta , z) & \geq  \min \{ \sigma c_{4},c_{1} \} ( \|(y_1,z)\|^2_{\mathcal{O}_\PZ} + \|\eta \|^2 ),
\end{align}
Therefore, taking the derivative:

\begin{align}
\label{eq:fullv3p}
\dot{V}_c(\eta , z) &=  \sigma \frac{\partial V_\PZ}{\partial z} \Psi(y_1,0,z)   + \sigma \frac{\partial V_\PZ}{\partial y_1} \dot{y}_1 \dots  \\
		    &\:\:\:\:\:\:\:\:\:+ \sigma \frac{\partial V_\PZ}{\partial z}(\Psi(\eta , z)-\Psi(y_1,0,z))+   \dot{V}_\epsilon (\eta) , \nonumber \\
 & \leq  -\sigma c_6 \|(y_1,z)\|^2_{\mathcal{O}_\PZ} + \sigma c_7 L_q \|(y_1,z)\|_{\mathcal{O}_\PZ} \|\eta_2\| +  \dot{V}_\epsilon (\eta), \nonumber \\
  & \leq  -\sigma c_6 \|(y_1,z)\|^2_{\mathcal{O}_\PZ} + \sigma c_7 L_q \|(y_1,z)\|_{\mathcal{O}_\PZ} \|\eta\| +  \dot{V}_\epsilon (\eta), \nonumber
\end{align}
where $L_q$ is the Lipschitz constant for $\Psi$ in \eqref{eq:dynamicsdhere}.
Substituting for $\dot{V}_\epsilon$ leads to the following expression for the Lyapunov function:
\begin{align}
 \dot{V}_c \leq & - \sigma c_6 \|(y_1,z)\|^2_{\mathcal{O}_\PZ} + \sigma c_7 L_q \|(y_1,z)\|_{\mathcal{O}_\PZ} \|\eta\| \nonumber \\
	        & - \frac{\gamma }{\epsilon} V_\epsilon - \frac{1}{\bar \epsilon} \bar V_\epsilon + 2 \|\eta\| \|P_\epsilon\| \|d\|_\infty \\
	         \dot{V}_c \leq & - \sigma c_6 \|(y_1,z)\|^2_{\mathcal{O}_\PZ} + \sigma c_7 L_q \|(y_1,z)\|_{\mathcal{O}_\PZ} \|\eta\| \nonumber \\
	        & - \frac{\gamma }{\epsilon} V_\epsilon - \frac{1}{\bar \epsilon} c^2_1 \|\eta\|^2 + 2 \|\eta\| \|P_\epsilon\| \|d\|_\infty \nonumber 
\end{align}
With $\bar\epsilon$ small enough, the disturbance can be rejected by the expression $\frac{1}{\bar \epsilon} c^2_1 \|\eta\|^2$ for $\|\eta\|\geq \frac{2\bar\epsilon c_2}{c_1^2\epsilon^2}\|d\|_\infty$;
giving the following result:
\begin{align}
       \dot{V}_c \leq  - \sigma c_6 \|(y_1,z)\|^2_{\mathcal{O}_\PZ} + \sigma c_7 L_q \|(y_1,z)\|_{\mathcal{O}_\PZ} \|\eta\| - \frac{\gamma }{\epsilon} V_\epsilon \nonumber \\
       \hspace{20mm} \quad \rm{for} \quad  \|\eta\| \geq \frac{2\bar\epsilon c_2}{c_1^2\epsilon^2}\|d\|_\infty,
\end{align}
which is the standard inequality for ISS-Lyapunov functions \eqref{eq:ISSd}. Therefore, for exponential convergence, $\sigma$ is picked such that
$c_6 c_1 \frac{\gamma}{\epsilon} - \sigma \frac{c_7^2 L_q^2}{4} > 0$.
%
\end{proof}
\gap
\bibliographystyle{plain}
\bibliography{bibdata.bib}

\end{document}